\let\oldmarginpar\marginpar
\renewcommand\marginpar[1]{\-\oldmarginpar[\raggedleft\footnotesize #1]%
{\raggedright\footnotesize #1}}
\newtheorem{theorem}{Theorem}
\newtheorem{lemma}{Lemma}
\newtheorem{proposition}{Proposition}
\theoremstyle{definition}
\newtheorem{definition}{Definition}
\theoremstyle{remark}
\newtheorem{remark}{Remark}
\newcommand{\Z}{\mathbb{Z}}
\newcommand{\abs}[1]{|#1|}
\newcommand{\Bigabs}[1]{\Big|#1\Big|}
\newcommand{\R}{\mathbb{R}}
\def\Z{\mathbb{Z}}
\def\R{\mathbb{R}}
\def\1{\mathbf{1}}
\newcommand{\dif}{\mathrm{d}}
\newcommand{\e}{\mathrm{e}}
\newcommand{\im}{\mathrm{i}}
\newcommand{\norm}[1]{\|#1\|}
\author{Shahaf Nitzan}
\address{School of Mathematical Sciences, Weizmann Institute for Science, Rehovot 76100, Israel.} 
\email{shahaf.nitzansi@weizmann.ac.il}
\author{Jan-Fredrik Olsen}
\address{Centre for Mathematical Sciences, Lund University, P.O. Box 118, SE-221 00 Lund, Sweden}
\email{janfreol@maths.lth.se}
\subjclass[2000]{Primary 42C15; Secondary 42A38}
\begin{document}

 \title{A quantitative     Balian-Low theorem}

\begin{abstract}

We study   functions generating Gabor Riesz bases on the integer lattice.
The classical Balian-Low theorem restricts  the simultaneous time and frequency localization of such functions.
We obtain a quantitative estimate that extends both this result
and other related theorems.
\end{abstract}

\maketitle

\section{Introduction}

Let $g \in L^2(\R)$. The Gabor system generated by $g$ with
respect to the lattice $\Z \times \Z$ is the set
\begin{equation*}
    G(g) := \{ \e^{2\pi \im  n t} g(t - m)\}_{(m,n) \in \Z^2}.
\end{equation*}
Lattice Gabor systems play an important role in time-frequency
analysis and its applications \cite{dorfler2001, grochenig2001, strohmer2006}. A central
problem in this area is to describe the optimal
time-frequency localization of generators of "good" Gabor systems, e.g., orthonormal bases and Riesz bases.

The classical Balian-Low theorem \cite{balian1981, daubechies1990, low1985} is an uncertainty
principle for   generators of Gabor Riesz bases. It states that if $G(g)$ is such a system, then
\begin{equation}\label{clasical blt condition}
     \int_\R \abs{g(t)}^2 t^2 \dif t   =\infty \:\:\:\:\:\text{or}\:\:\:\:\: \int_\R \abs{\hat{g}(\xi)}^2 \xi^2\dif \xi  = \infty.
\end{equation}
In the last 20 years, additional versions of this theorem were
established. In particular we mention the $(p,q)$ Balian-Low theorem 
and the Amalgam Balian-Low theorem
(see discussion in Section \ref{related section}).
However, these versions
do not include any quantified
 estimate for the decay of the generator.

  A quantitative version of the classical uncertainty principle
  states that there exist constants $A,C>0$ such that for
   $g\in L^2(\R)$ 
\begin{equation} \label{a number}
        \int_{\abs{t} \geq R} \abs{g(t)}^2 \dif t + \int_{\abs{\xi} \geq L}  \abs{\hat{g}(\xi)}^2 \dif \xi \geq
       C \e^{-ARL}\norm{g}^2_{L^2(\R)},\:\:\:\:\:\:\:\:\:\:\forall R,L>0.
\end{equation}
(This follows from \cite{fuchs1964}.  See also Section \ref{open problems}.)
In light of the analogy between the
classical uncertainty principle and the classical Balian-Low
theorem, the following question was asked by K.~Grochenig: Is it
possible to obtain a version of \eqref{a number}
for generators
of Gabor Riesz bases?

Our main result is as follows.
\begin{theorem} \label{main result}
    Let $g \in L^2(\R)$. If $G(g)$ is a Riesz basis, then for $R,L\geq 1$ we have
    \begin{equation} \label{main estimate}
        \int_{\abs{x} \geq R} \abs{g(t)}^2 \dif t + \int_{\abs{\xi} \geq L}  \abs{\hat{g}(\xi)}^2 \dif \xi \geq
        \frac{C}{RL},
    \end{equation}
    where $C$ is a positive constant  depending only on the
    Riesz basis bounds.
%
%

This result is sharp in the sense that, for big enough $C$, the
lower bound cannot be replaced by $C\log (RL) /(RL)$.
\end{theorem}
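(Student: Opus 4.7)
The natural framework is the Zak transform $Zg(x,\xi)=\sum_{k\in\Z}g(x+k)\e^{-2\pi\im k\xi}$, a unitary map from $L^2(\R)$ onto the Hilbert space of measurable functions satisfying the quasi-periodicity $Zg(x+1,\xi)=\e^{2\pi\im\xi}Zg(x,\xi)$, $Zg(x,\xi+1)=Zg(x,\xi)$, endowed with the $L^2([0,1]^2)$ inner product. The standard Zak-transform characterization identifies Gabor Riesz bases over $\Z\times\Z$ with the condition $A\leq|Zg|^2\leq B$ a.e., where $A,B$ are the Riesz basis bounds. The classical Balian-Low theorem is topological: the winding of $Zg$ in the $x$-direction induced by quasi-periodicity cannot coexist with continuity and a strictly positive lower bound.

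Set $\epsilon_1=\int_{|t|\geq R}|g|^2\,\dif t$ and $\epsilon_2=\int_{|\xi|\geq L}|\hat g|^2\,\dif\xi$, and introduce the truncations $\varphi=g\cdot\1_{[-R,R]}$ and $h$ defined by $\hat h=\hat g\cdot\1_{[-L,L]}$. By unitarity of $Z$, $\|Zg-Z\varphi\|_{L^2([0,1]^2)}^2=\epsilon_1$ and $\|Zg-Zh\|_{L^2([0,1]^2)}^2=\epsilon_2$. Structurally, $Z\varphi(x,\xi)$ is, for each fixed $x\in[0,1]$, a trigonometric polynomial in $\xi$ of degree $O(R)$, while the Poisson-dual representation $Zg(x,\xi)=\e^{2\pi\im x\xi}\sum_{n\in\Z}\hat g(\xi+n)\e^{2\pi\im nx}$ shows that $\e^{-2\pi\im x\xi}Zh(x,\xi)$ is, for each fixed $\xi\in[0,1]$, a trigonometric polynomial in $x$ of degree $O(L)$. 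Thus $Zg$ lies $L^2$-close to two ``low-resolution'' approximations, and the task reduces to showing that this dual approximation cannot coexist with the lower bound $|Zg|\geq\sqrt A$ unless $\epsilon_1+\epsilon_2\gtrsim 1/(RL)$.

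The central step is to make the topological obstruction quantitative. The plan is to construct a function $\kappa\in L^2(\R)$ with the properties (a) $\supp\kappa\subset\R\setminus[-R,R]$, (b) $\supp\hat\kappa\subset\R\setminus[-L,L]$, (c) $\|\kappa\|_{L^2}=O(\sqrt{RL})$, and (d) $|\langle g,\kappa\rangle|\geq c$, where $c>0$ depends only on the Riesz basis bounds. Properties (a) and (b) together with Cauchy-Schwarz then force
\[
c\leq|\langle g,\kappa\rangle|\leq\|\kappa\|_{L^2}\cdot\min\!\bigl(\sqrt{\epsilon_1},\sqrt{\epsilon_2}\bigr),
\]
and combining with (c) yields $\epsilon_1+\epsilon_2\geq C/(RL)$ with $C$ depending only on the Riesz bounds.

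The principal obstacle is the existence of such $\kappa$: a function simultaneously vanishing on large intervals in both time and frequency, yet pairing non-trivially with the given Riesz basis generator $g$. This is exactly where the topological obstruction must enter: $Zg$ has winding one in a twisted line bundle over $\T^2$, and the functions in the two approximation subspaces share that winding, so the right $\kappa$ should arise from a theta-function-type section of the dual bundle, or equivalently from a line-integral formula capturing the winding of $\arg Zg$ around the fundamental domain, combined with matched high-pass projections. Sharpness -- that a $\log(RL)/(RL)$ improvement is impossible -- would be established by explicit constructions, for instance carefully smoothed and modulated perturbations of $\1_{[0,1]}$ whose time- and frequency-tails beyond $R$ and $L$ decay exactly like $1/(RL)$.
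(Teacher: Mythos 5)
Your setup is correct and you have correctly located the heart of the matter, but the proof has a genuine gap precisely there: the existence of the dual witness $\kappa$ with properties (a)--(d) is announced as a ``plan'' and then acknowledged as ``the principal obstacle,'' with only a heuristic appeal to winding numbers and theta-function sections in its place. Nothing in the proposal actually produces a $\kappa$ supported outside $[-R,R]$ in time, outside $[-L,L]$ in frequency, of norm $O(\sqrt{RL})$, and with $|\langle g,\kappa\rangle|\geq c$ uniformly over all Riesz generators $g$ with given bounds. Note that such a $\kappa$ would in general have to depend on $g$, and its existence with these quantitative parameters is essentially equivalent to the theorem itself; the topological obstruction (nonzero winding of $\arg Zg$) only tells you that $Zg$ cannot be continuous with $|Zg|\geq\sqrt{A}$, and converting that qualitative fact into a lower bound of the precise order $1/(RL)$ is exactly the step that is missing. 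The duality reduction via Cauchy--Schwarz (properties (a)--(c) imply $\epsilon_1+\epsilon_2\gtrsim 1/(RL)$ given (d)) is fine, but it only relocates the difficulty.

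For comparison, the paper makes the obstruction quantitative in a different and more elementary way: it shows that on any $K\times N$ grid in the unit square, some branch of $\arg Zg$ must jump by more than $1/8$ modulo $1$ between adjacent grid points (this is the combinatorial content of the winding), upgrades this by a pigeonhole argument to a set of measure at least $1/(KN)$ on which an increment $|Zg(x+1/K,y)-Zg(x,y)|$ or $|Zg(x,y+1/N)-Zg(x,y)|$ is bounded below by a constant $\delta(A)$, and then observes that the mollifications $Z(g\ast\phi)$ and $Z(\hat g\ast\psi)$ (with $\phi,\psi$ dilated at scales $R,L$) are Lipschitz in the relevant variable with constants of order $K\sim R$ and $N\sim L$, hence cannot reproduce these jumps. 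Unitarity of the Zak transform and the frequency localization of $\hat\phi,\hat\psi$ then convert the resulting $L^2([0,1]^2)$ lower bound of order $1/(RL)$ into the stated tail bound. Your truncation operators $\1_{[-R,R]}$ and $\1_{[-L,L]}$ play the role of the paper's mollifiers, but without the grid/argument lemma there is no mechanism forcing the $1/(RL)$ rate. The sharpness claim is likewise unsubstantiated in your write-up; the paper exhibits a specific orthonormal generator (from Benedetto--Czaja--Gadzi\'nski--Powell) with tails $\lesssim 1/R^2+\log L/L^2$.
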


Theorem \ref{main result} is strictly stronger than the classical
Balian-Low theorem as well as its $(p,q)$ version (see Sections
\ref{classical subsection} and \ref{pq subsection}). The Amalgam Balian-Low theorem is independent of
Theorem \ref{main result} (see Section \ref{amalgam subsection}). However, in Section
\ref{amalgam subsection}, we find a scale of uncertainty estimates that interpolate
Theorem \ref{main result} and the Amalgam Balian-Low theorem as
$(2,2)$ and $(1,\infty)$ endpoints.


The structure of the paper is as follows. In Section \ref{prelim section}, we
give some needed background and prove Lemma \ref{argument lemma}, which is the main step in the proof of Theorem \ref{main result}. We prove Theorem \ref{main result} in Section
\ref{main section}.. Finally,
as mentioned above, in Section \ref{related section}, we discuss the connection
between Theorem \ref{main result} and other Balian-Low type estimates.

\section{Preliminaries} \label{prelim section}

In this section, we recall the definitions of Riesz bases, the Zak
transform, and quasi-periodic functions.
Moreover, we obtain our key lemma, which quantifies the discontinuous
behavior of arguments of quasi-periodic functions.

\subsection{The Zak transform}
The following definition provides the main tool in the study of
lattice Gabor systems (see, for example, \cite[Chapter 8]{grochenig2001}).
\begin{definition}
    Let $g \in L^2(\R)$. The Zak transform of $g$ is given by
    \begin{equation*}
        Zg(x,y) = \sum_{k \in \Z} g(x-k) \e^{2\pi \im ky}, \qquad(x,y) \in \R^2.
    \end{equation*}
\end{definition}
 For $g \in L^2(\R)$ the function $Zg$ is 
quasi-periodic on $\R^2$. Namely, it satisfies
\begin{equation*}
    Zg(x,y+1) = Zg(x,y), \qquad \text{and} \qquad Zg(x+1,y) = \e^{2\pi \im y} Zg(x,y).
\end{equation*}
In particular, this implies that $Zg$ is uniquely determined by
its values on $[0,1]^2$. It is well-known that the Zak transform induces a unitary operator from
$L^2(\R)$ onto $L^2([0,1]^2)$.

We will use some basic properties of the Zak transform: The
Zak transform satisfies
\begin{equation}\label{zak x-y}
    Z\hat{g}(x,y) = \e^{2\pi \im xy} Zg(-y,x),
\end{equation}
where 
\begin{equation*}
	 \hat{g}(\xi) = \int_\R g(t) \e^{-2\pi \im \xi t} \dif t
\end{equation*}
is the Fourier transform for functions in $L^1(\R)$, which has   the usual extension to functions in $L^2(\R)$. Furthermore, we have
\begin{equation} \label{convolution}
    Z(g \ast \phi) = Zg \ast_x \phi,
\end{equation}
for any Schwarz class function $\phi$, where the convolution on
the right-hand side is taken with respect to the first variable.

\subsection{Riesz bases}

Let $H$ be a separable Hilbert space. A system $\{f_n\}$ in $H$ is
called a Riesz basis if it is the image of an orthonormal basis
under a bounded and invertible linear operator. This means that
$\{f_n\}$ is a Riesz basis if it is complete in $H$ and
\begin{equation}\label{r-b condition}
    A \sum|a_n|^2 \leq \Big\|\sum a_nf_n \Big\|^2 \leq B \sum|a_n|^2, \:\:\:\:\:\:\:\:\forall\{a_n\}\in \ell^2,
\end{equation}
where $A$ and $B$ are   positive constants. The biggest $A$ and
smallest $B$ for which \eqref{r-b condition} holds are called the
Riesz basis bounds.

It is well-known that if a Gabor system  
\begin{equation*}
	 G(g,a,b):= \{ \e^{2\pi
\im b  n t} g(t - ma)\}_{(m,n) \in \Z^2} 
\end{equation*}
 is a Riesz basis, then
$ab=1$ \cite{ramanathan_steger1995}. Although the results in this paper are stated for the case $a=b=1$, they also hold for all systems $G(g,a,b)$ with $ab=1$. This can be seen
 by appropriately dilating the generator function $g$.

The Zak transform allows one to characterize generators of
lattice Gabor Riesz bases. Indeed, it is easy to see that  $G(g)$ is
such a system if and only if
\begin{equation} \label{Zak bounds}
    A \leq \abs{Zg(x,y)}^2 \leq B, \qquad \forall (x,y) \in Q,
\end{equation}
where $A$ and $B$ are the Riesz basis bounds. This
characterization implies that in order to study the discontinuous
behavior of $Zg$, it is enough to understand the behavior of its
argument.

\subsection{Arguments of quasi-periodic functions}
It is known that an argument of a quasi-periodic function can
never be continuous (see, e.g., \cite[Lemma 8.4.2]{grochenig2001} and the references therein). We now establish our main lemma, which
is a stronger version of this statement.

First, we introduce some notation. Let $G$ be a quasi-periodic function on
$\R^2$ and $H$ be a branch of its argument, i.e.,
\begin{equation*}
     G(x,y) = \abs{G(x,y)} \e^{2\pi \im H(x,y)}.
\end{equation*}
Fix two integers $K, N\geq8$, and a point $(x,y) \in [0,1/K) \times
[0,1/N)$. Set
\begin{equation*}
    h_{i,j} = H\left(x + \frac{i}{K}, y + \frac{j}{N} \right), \qquad \forall (i,j) \in \Z^2.
\end{equation*}
\begin{lemma} \label{argument lemma}
 There exist two integers $0 \leq i < K$ and $0 \leq j < N$ such that either
    \begin{equation} \label{i jump}
        \abs{h_{i+1,j} - h_{i,j}} > \frac{1}{8} \mod 1
    \end{equation}
    or
    \begin{equation} \label{j jump}
        \abs{h_{i,j+1} - h_{i,j}} > \frac{1}{8} \mod 1.
    \end{equation}
\end{lemma}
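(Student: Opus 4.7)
The plan is to argue by contradiction. Suppose that every mod-$1$ horizontal or vertical jump on the $K\times N$ block has magnitude at most $1/8$, and denote by $\delta^x_{i,j},\delta^y_{i,j}\in(-1/2,1/2]$ the canonical mod-$1$ representatives of $h_{i+1,j}-h_{i,j}$ and $h_{i,j+1}-h_{i,j}$; each is then bounded in absolute value by $1/8$. The strategy is to exhibit two incompatible values for a single column sum of $\delta^y$'s.

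The first key step is a discrete closedness identity. The four-corner telescoping
\begin{equation*}
(h_{i+1,j+1}-h_{i,j+1})-(h_{i+1,j}-h_{i,j})=(h_{i+1,j+1}-h_{i+1,j})-(h_{i,j+1}-h_{i,j})
\end{equation*}
holds on the nose, so when we pass to mod-$1$ representatives the combination $(\delta^x_{i,j+1}-\delta^x_{i,j})-(\delta^y_{i+1,j}-\delta^y_{i,j})$ is an integer; under the hypothesis its magnitude is at most $4\cdot\tfrac18=\tfrac12$, forcing it to vanish. We thus obtain the \emph{exact} (not merely mod-$1$) identity $\delta^x_{i,j+1}-\delta^x_{i,j}=\delta^y_{i+1,j}-\delta^y_{i,j}$.

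Set $T^y_i:=\sum_{j=0}^{N-1}\delta^y_{i,j}$. The $y$-periodicity of $H$ gives $\delta^x_{i,N}\equiv\delta^x_{i,0}\bmod 1$ with both sides bounded by $1/8$, so in fact $\delta^x_{i,N}=\delta^x_{i,0}$; summing the closedness identity in $j$ then telescopes to $T^y_{i+1}-T^y_i=0$, whence $T^y_K=T^y_0$. On the other hand the $x$-quasi-periodicity of $H$ gives $\delta^y_{K,j}-\delta^y_{0,j}\equiv 1/N\bmod 1$, the left-hand side has magnitude at most $1/4$, and for $N\ge 8$ the only element of $1/N+\Z$ of magnitude $\le 1/4$ is $1/N$ itself. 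Summing over $j$ yields $T^y_K-T^y_0=N\cdot(1/N)=1$, contradicting $T^y_K=T^y_0$. The crux of the argument, and what pins down both the threshold $1/8$ and the requirement $K,N\ge 8$, is this repeated collapse of the mod-$1$ ambiguity: the smallness hypothesis forces every integer correction arising in the computation to be zero, converting local mod-$1$ estimates into exact real identities, after which a discrete Stokes computation plays the closedness of the $\delta$-field off against the nontrivial monodromy $1/N$ imposed by $x$-quasi-periodicity.
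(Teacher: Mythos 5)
Your proof is correct and is essentially the paper's argument in different bookkeeping: the paper constructs an explicit small-increment branch of $H$ and reaches the contradiction $L_K=L_0$ versus $L_K-L_0=1$, which is precisely your $T^y_K=T^y_0$ versus $T^y_K-T^y_0=1$ obtained via canonical mod-$1$ representatives and the discrete closedness identity. One small imprecision: in the boundary column $i=K-1$ the term $\delta^y_{K,j}$ is only bounded by $1/4$ (it equals $\delta^y_{0,j}+1/N$ by quasi-periodicity, not being covered by the hypothesis), so the integer in your closedness identity is bounded by $5/8$ rather than $1/2$ --- still less than $1$, so it vanishes and the argument goes through.
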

\begin{proof}
    Assume that both  \eqref{i jump} and \eqref{j jump} do not hold, and
     choose a branch of the argument $H$ as follows:
    \begin{itemize}
    \item[-] First, fix the number $h_{0,0}$ and choose  $\{h_{i,0}\}_{i=1}^K$ to satisfy $\abs{h_{i+1,0} - h_{i,0}}\leq 1/8$  for $0 < i < K$.
    \item[]
    \item[-] Next, for each  $0 \leq i < K$, choose   $\{h_{i,j}\}_{j=1}^N$ 
    so that $\abs{h_{i,j+1} - h_{i,j}} \leq 1/8$ for $0\leq j<N$.
    \item[]
    \item[-] Finally, recall that the point $(x,y)$ was fixed and use the quasi-periodicity of $G$ to
    observe that $h_{K,0} = h_{0,0} + y + M$ for some integer $M$. Now, choose, for $0 < j \leq N$,
\begin{equation}\label{condition 3}
    h_{K,j}=h_{0,j}+\frac{j}{N}+y + M.
\end{equation}
	 This implies that $\abs{h_{K,j+1} - h_{K,j}} \leq 1/4$ for $0\leq j < N$.
    \end{itemize}
    All other values of the branch $H$ may be chosen arbitrarily.

     We claim that with these choices we also have
    \begin{equation} \label{mother}
        \abs{h_{i+1, N}  - h_{i,N}} \leq \frac{1}{8}, \qquad 0 \leq i < K.
    \end{equation}
 Indeed, to prove \eqref{mother}, first note that, since $G$ is a quasi-periodic function, this inequality holds modulo 1. Now we may use a
     recursive argument: As $\abs{h_{i+1,0} - h_{i,0}}\leq 1/8$, it follows, by comparing with these two
     terms, that
     $\abs{h_{i+1, 1} - h_{i,1}} \leq 1/2$. But, since $\abs{h_{i+1,1} -h_{i,1}} \leq 1/8$ modulo $1$, it is clear that this inequality also holds
     without taking modulo $1$.
     Repeating this argument $N$ times, one arrives at
     \eqref{mother}.

By the periodicity of $G$ in the $y$-variable, there exist
integers $L_i$ so that $h_{i,N} = h_{i, 0} + L_i$. By
\eqref{mother},
    we have that $\abs{L_{i+1} - L_i} \leq 1/4$. Hence,  all $L_i$ are equal. In particular, $L_K=L_0$.

     On the other hand,
    condition \eqref{condition 3} implies that the
sequence $\{b_j\}:=\{h_{K,j}-h_{0,j}\}$ satisfies
\begin{equation*}
b_{j+1}-b_j=\frac{1}{N},
\end{equation*}
whence $L_{K}-L_{0}=b_N-b_0=1$. This gives the required
contradiction.
\end{proof}

\section{Proof of Theorem \ref{main result}} \label{main section}

In this section, we obtain some consequences of Lemma \ref{argument lemma} which we then   use to prove our main result.

 \subsection{Bounded quasi-periodic functions}

A basic measure theoretic argument   yields the following
consequence of Lemma \ref{argument lemma}.
\begin{lemma}\label{corollary}
    Fix $D>0$. There exists a positive constant
    $\delta=\delta(D)$ such that, given any two integers $K,N\geq8$ and any quasi-periodic function $G$
    with
    $\abs{G}\geq D$ almost everywhere, one can find
 a set $S\subseteq [0,1]^2$ of measure
    at least $1/NK$ such that all $(x,y) \in S$ satisfy either
    \begin{equation}\label{coro-cond-1}
        \Bigabs{ G  ( x+{\frac{1}{K}}, y )-G  ( x, y )} \geq \delta,
    \end{equation}
    or
    \begin{equation}\label{coro-cond-2}
        \Bigabs{ G  ( x, y+\frac{1}{N})- G  ( x, y)} \geq \delta.
    \end{equation}
\end{lemma}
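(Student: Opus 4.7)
The plan is to apply Lemma~\ref{argument lemma} pointwise on the base cell $B := [0, 1/K) \times [0, 1/N)$ and then use a measurable selection to organize the resulting lattice of large-argument-jump locations into a measurable subset of $[0,1]^2$ of size at least $1/(KN)$.

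The first step is to convert the argument-jump conclusion of Lemma~\ref{argument lemma} into a genuine distance bound on values of $G$. If $u, v \in \C$ satisfy $|u|, |v| \geq D$ and their arguments differ by more than $1/8$ in $\R/\Z$, then the law of cosines produces an explicit constant $\delta = \delta(D) > 0$ (for instance $\delta(D) = D\sqrt{2-\sqrt{2}}$) with $|u - v| \geq \delta$. With this $\delta$ in hand, define the measurable set
\begin{equation*}
S := \Set{(x,y) \in [0,1]^2 : \eqref{coro-cond-1} \text{ or } \eqref{coro-cond-2} \text{ holds at } (x,y)}.
\end{equation*}
Then $S$ automatically satisfies the pointwise conclusion required by the lemma, and the remaining task is to prove the lower bound $|S| \geq 1/(KN)$.

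For this, let $B' \subseteq B$ be the full-measure set of $(x,y)$ for which $|G(x+i/K, y+j/N)|\geq D$ holds at every $0 \leq i < K$, $0 \leq j < N$; by quasi-periodicity the same bound then holds at every integer shift, so Lemma~\ref{argument lemma} applies at each $(x,y)\in B'$. For such an $(x,y)$ it furnishes a pair $(i,j)\in\{0,\dots,K-1\}\times\{0,\dots,N-1\}$ for which the translated point $(x+i/K,y+j/N)$ lies in $S$. Selecting the lexicographically smallest such pair gives a measurable partition $B' = \bigsqcup_{i,j} A_{i,j}$.

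Finally, the translates $A_{i,j} + (i/K, j/N)$ live in the pairwise disjoint grid cells $[i/K,(i{+}1)/K)\times[j/N,(j{+}1)/N)\subseteq [0,1]^2$ and are, by construction, all contained in $S$. Summing gives
\begin{equation*}
|S| \;\geq\; \sum_{i=0}^{K-1}\sum_{j=0}^{N-1} |A_{i,j}| \;=\; |B'| \;=\; \frac{1}{KN},
\end{equation*}
as required. I do not anticipate any serious obstacle; the points that deserve care are the measurability of the selection (handled by the lex-order convention) and the quantitative passage from argument distance modulo $1$ to honest value distance (handled by the law of cosines together with $|G|\geq D$).
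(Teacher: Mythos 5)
Your proof is correct, and it fills in exactly the ``basic measure theoretic argument'' that the paper invokes without writing out: pointwise application of Lemma~\ref{argument lemma} on the base cell, the law-of-cosines passage from an argument gap of $1/8$ modulo $1$ to the distance bound $\delta(D)=D\sqrt{2-\sqrt{2}}$, and the measure count via disjoint translates of the selection sets $A_{i,j}$ into the $KN$ grid cells. Nothing further is needed.
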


Next, we use Lemma \ref{corollary} to evaluate the difference
between the Zak transform of a function and the Zak transform of its
convolution with some smooth kernel.
\begin{lemma} \label{average lemma}

    Fix $m, M >0$. There exist positive constants
    $\delta=\delta(m)$ and $\eta=\eta(m,M)$ such that, given any pair of Schwarz class functions $\phi,
    \psi$ on $\R$
    and any $g\in L^2(\R)$
    with
    $m<\abs{Zg}<M$ almost everywhere, one can find
 a set $S\subseteq [0,1]^2$ of measure
    at least $\eta/(1+\int|\phi'|)(1+\int|\psi'|)$ such that all $(x,y) \in S$ satisfy either
    \begin{equation}\label{cor2-cond-1}
        \abs{Zg(x,y) - Z(g \ast \phi)(x,y)}\geq \delta,
    \end{equation}
    or
    \begin{equation}\label{cor2-cond-2}
     \abs{Z\hat{g}(x,y) - Z(\hat{g} \ast \psi)(x,y)} \geq \delta.
    \end{equation}
\end{lemma}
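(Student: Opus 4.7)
We apply Lemma \ref{corollary} to the quasi-periodic function $G=Zg$, with integers $K$ and $N$ to be chosen in terms of $\int|\phi'|$ and $\int|\psi'|$, and transfer the resulting jump conditions to the convolution differences \eqref{cor2-cond-1} and \eqref{cor2-cond-2}. Note that, by \eqref{zak x-y} and the double periodicity of $|Zg|$, the hypothesis $m<|Zg|<M$ yields $m<|Z\hat{g}|<M$ as well. Lemma \ref{corollary} then produces a constant $\delta_0=\delta_0(m)>0$ and a set $S_0\subseteq[0,1]^2$ of measure at least $1/(KN)$ on which either \eqref{coro-cond-1} or \eqref{coro-cond-2} holds with $G=Zg$.

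\textbf{Horizontal jumps give \eqref{cor2-cond-1}.} Using \eqref{convolution} and $|Zg|\leq M$, a direct computation with Fubini shows that $(Zg*_x\phi)(\cdot,y)$ is Lipschitz in the first variable with constant $M\int|\phi'|$; namely, for every $h>0$,
\begin{equation*}
    |(Zg*_x\phi)(x+h,y) - (Zg*_x\phi)(x,y)| \leq Mh\int|\phi'|.
\end{equation*}
Choosing $K$ so that $M\int|\phi'|/K\leq \delta_0/4$, and applying the triangle inequality at a point satisfying \eqref{coro-cond-1}, we conclude that $|Zg-Z(g*\phi)|$ exceeds $3\delta_0/8$ at either $(x,y)$ or $(x+1/K,y)$.

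\textbf{Vertical jumps give \eqref{cor2-cond-2} via Fourier duality.} Inverting \eqref{zak x-y} gives $Zg(x,y)=\e^{2\pi\im xy}Z\hat{g}(y,-x)$, whence
\begin{equation*}
    Zg(x,y+1/N) - Zg(x,y) = \e^{2\pi\im xy}\bigl(\e^{2\pi\im x/N}Z\hat{g}(y+1/N,-x) - Z\hat{g}(y,-x)\bigr).
\end{equation*}
Since $|\e^{2\pi\im x/N}-1|\leq 2\pi/N$ and $|Z\hat{g}|\leq M$, the choice $8\pi M/N\leq\delta_0$ absorbs the phase and converts \eqref{coro-cond-2} into a genuine horizontal jump of $Z\hat{g}$ of size at least $3\delta_0/4$ at $(y,-x)$. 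Repeating the argument of the previous paragraph now with $Z\hat{g}$ and $\psi$ (using $Z(\hat{g}*\psi)=Z\hat{g}*_x\psi$), and imposing the additional constraint $M\int|\psi'|/N\leq\delta_0/4$, we obtain \eqref{cor2-cond-2} at $(y,-x)$ or $(y+1/N,-x)$. Double quasi-periodicity of $|Z\hat{g}-Z(\hat{g}*\psi)|$ allows us to bring these points back into $[0,1]^2$ by a measure-preserving map.

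\textbf{Conclusion and main obstacle.} Taking $K$ of order $1+\int|\phi'|$ and $N$ of order $1+\int|\psi'|$ (with constants depending only on $m$ and $M$), the set $S_0$ has measure at least $\eta(m,M)/((1+\int|\phi'|)(1+\int|\psi'|))$, and the maps above yield a set $S\subseteq[0,1]^2$ with the same lower bound (up to a harmless constant factor) on which the conclusion holds with $\delta=3\delta_0/8=\delta(m)$. The only technical subtlety is the Fourier-dual step: one must check carefully that the phase factor $\e^{2\pi\im x/N}$ is absorbed into the choice of $N$ without degrading $\delta$, and that the transfer $(x,y)\mapsto(y,-x)$ preserves the (quasi-periodic) validity of \eqref{cor2-cond-2}. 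Everything else is routine triangle-inequality bookkeeping.
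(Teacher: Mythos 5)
Your proposal is correct and follows essentially the same route as the paper's proof: apply Lemma \ref{corollary} to $Zg$, use the Lipschitz bound $|Z(g\ast\phi)(x+h,y)-Z(g\ast\phi)(x,y)|\leq Mh\int|\phi'|$ coming from \eqref{convolution} to convert horizontal jumps into \eqref{cor2-cond-1}, and use \eqref{zak x-y} together with quasi-periodicity to turn vertical jumps of $Zg$ into horizontal jumps of $Z\hat{g}$ (absorbing the $O(M/N)$ phase error into the choice of $N$) before repeating the argument with $\psi$. The only discrepancy is a harmless constant: in the Fourier-dual branch the surviving lower bound is $\delta_0/4$ rather than $3\delta_0/8$, so one should take $\delta$ to be the smaller of the two, exactly as the paper does with its final $\delta_1/8$.
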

\begin{proof}
Observe that by \eqref{convolution}, for all $K>0$,
we have
\begin{equation}\label{lipschitz}
    \abs{ Z(g \ast \phi)(x+1/K,y) - Z (g \ast \phi)(x,y) }  \leq
    \frac{ M}{K} \int_\R \abs{\phi'(\tau)} \dif \tau.
\end{equation}

For $G = Zg$ and $D=m$, let $\delta_1>0$ be the constant from Lemma
\ref{corollary}. Choose the smallest integers $N,K\geq 8$ that satisfy
\begin{equation*}
 K\geq  \frac{2M}{\delta_1} \int
 \abs{\phi'}  \qquad \text{and} \qquad N \geq  \frac{4\pi M}{\delta_1}\left( \int
 \abs{\psi'} +1\right),
\end{equation*}
and let $S_1$ be the set described in Lemma \ref{corollary}.

If $(x,y)\in S_1$ satisfies \eqref{coro-cond-1}, then, in light of
\eqref{lipschitz}, we find that either $(x,y)$ or $(x+1/K,y)$
satisfy \eqref{cor2-cond-1} with $\delta=\delta_1/4$.

On the other hand, if $(x,y)\in S_1$ satisfies \eqref{coro-cond-2},
then \eqref{zak x-y} and the quasi-periodicity of $Zg$ imply that
\begin{equation*}
     \abs{Z\hat{g}(1-y - 1/N, x) - Z\hat{g}(1-y,x)}
     \geq \delta_1 -\frac{2\pi}{N}M \geq \frac{\delta_1}{2}.
\end{equation*}
Applying \eqref{lipschitz} once again, we find that either
$(1-y,x)$ or $(1-y-1/N, x)$ satisfy \eqref{cor2-cond-2} with
$\delta=\delta_1/8$.

 In this way we   find a set $S$ of measure at least $|S_1|/4$
 with the required properties.

 (It may happen that $(x+1/K,y)$ or $(1-y-1/N,x)$ fall outside of
$[0,1]^2$. However, as the functions discussed are quasi-periodic,
the conclusion remains true if $x+1/K$ and $1-y-1/N$ are taken
modulo 1).
\end{proof}

\subsection{Proof of Theorem \ref{main result}}
\begin{proof}
Let $g \in L^2(\R)$ be such that $G(g)$ is a Riesz basis with
bounds $A$ and $B$. By \eqref{Zak bounds}, we have $m \leq
\abs{Zg} \leq M$ with $m=\sqrt{A}$ and $M=\sqrt{B}$.

Fix a Schwarz class function $\rho$ such that $\hat{\rho}$ is symmetric, satisfies
$|\hat{\rho}|\leq 1$ and
\begin{equation*}
     \hat{\rho}(\xi) = \left\{ \begin{split}  1 & \quad \text{if} \quad \abs{\xi} \leq 1,
     \\
      0 & \quad \text{if} \quad \abs{\xi} \geq 2. \end{split} \right.
\end{equation*}
Given $R,L\geq 1$, we apply Lemma \ref{average lemma} with
$\phi(t) = R \rho(Rt)$ and $\psi(t)=L\rho(Lt)$. We find that, for
some constants $C=C(A,B)$ and $\delta=\delta(A,B)$, there exists a
set $S\subseteq [0,1]^2$ with measure at least $C/RL$ such
that all $(x,y)\in S$ satisfy
\begin{equation*}
     \delta^2 \leq  \abs{Z\hat{g} - Z(\hat{g} \ast \phi) }^2 +  \abs{Zg - Z(g \ast \psi)}^2.
\end{equation*}
From this and \eqref{zak x-y} it follows that
\begin{align*}
     \frac{C\delta^2}{RL} &\leq \iint_{[0,1]^2}  \abs{Z\hat{g} - Z(\hat{g}\ast \phi)}^2  + \abs{Zg - Z({g} \ast \psi)}^2  \dif x \dif y \\
     &= \iint_{[0,1]^2}  \abs{Z\big({g}(1 -   \hat{\phi})\big)}^2 +   \abs{Z(\hat{g}\big(1 -  \hat{\psi})\big)}^2 \dif x \dif y.
\end{align*}
Since the Zak transform is a unitary operator from $L^2(\R)$ to
$L^2([0,1]^2)$, this gives
\begin{align*}
    \frac{C_1}{RL} 
    &\leq   \int_{\R}  \abs{g(t)}^2 \abs{1 - \hat{\phi}(t)}^2 \dif t +  \int_{\R}  \abs{\hat{g}(\xi)}^2 \abs{1 - \hat{\psi}(\xi)}^2 \dif \xi \\
    &\leq     \int_{\abs{t}>R}  \abs{{g}(t)}^2   \dif t + \int_{\abs{\xi}>L}  \abs{\hat{g}(\xi)}^2   \dif \xi,
\end{align*}
%
where $C_1=C\delta^2$ is a positive constant depending only on $A$
and $B$.

To see that the result is sharp, we use the function $g$ defined in \cite[Definition 2.3]{benedetto_czaja_gadzinski_powell2003}.
On the one hand, this function satisfies $\abs{Zg} =1$ for all
$(x,y) \in \R^2$, and therefore the system $G(g)$ is an
orthonormal basis. On the other hand, by following the proofs of
Theorems 3.4 and 3.10 in the same reference, it follows that
\begin{equation*}
    \int_{\abs{t}>R} \abs{g(t)}^2 \dif t + \int_{\abs{\xi} >L}   \abs{\hat{g}(\xi)}^2 \dif \xi \leq \frac{1}{R^2} + \frac{\log L}{L^2}.
\end{equation*}
This ends the proof.
\end{proof}

\section{ Theorem \ref{main result} in comparison to related results} \label{related section}
In this section, we discuss the connection between Theorem \ref{main result}, the
classical Balian-Low theorem and other results in this field. In
particular,  in Section \ref{pq subsection} we study the connection to the $(p,q)$
Balian-Low theorem and in Section \ref{amalgam subsection} the connection to the
Amalgam Balian-Low theorem.
\subsection{The classical Balian-Low theorem} \label{classical subsection}
We begin by observing that Theorem 1 implies the Balian-Low
theorem.
%
%
Indeed, it is enough to notice that if $g\in L^2(\R)$ satisfies
\eqref{main estimate}, then for every $R>0$ we have
\begin{equation} \label{calculation}
\begin{split}
C &\leq R^2\int_{|t|>R}|g(t)|^2\dif t+R^2\int_{|\xi|>R}|\hat{g}(\xi)|^2\dif \xi \\
& \leq \int_{|t|>R}|t|^2|g(t)|^2\dif
t+\int_{|\xi|>R}|\xi|^2|\hat{g}(\xi)|^2\dif \xi,
\end{split}
\end{equation}
whence at least one of the integrals in \eqref{clasical
blt condition}   diverges.

This proves the first half of the following proposition.

\begin{proposition} \label{th1 better}  If a function in $L^2(\R)$ satisfies condition \eqref{main estimate},
then it also satisfies condition \eqref{clasical blt condition}.
Moreover, there exists a function $g\in L^2(\R)$ that satisfies condition \eqref{clasical blt condition} but does not
satisfy condition \eqref{main estimate}.
\end{proposition}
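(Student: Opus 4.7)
The first implication has been argued in the excerpt via the computation in \eqref{calculation}. For the ``moreover'' part, the plan is to exhibit a concrete compactly supported function $g \in L^2(\R)$ whose Fourier transform decays just slowly enough to keep $\int \xi^2 |\hat g(\xi)|^2 \, \dif\xi$ infinite, yet fast enough that $\int_{|\xi| \geq L} |\hat g(\xi)|^2 \, \dif\xi$ is $o(1/L)$.

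A convenient candidate is the cusp-type function $g(t) = (1-t^2)_+^{\alpha}$ with a fixed parameter $\alpha \in (0, 1/2)$. This function is bounded and supported in $[-1,1]$, so it lies in $L^2(\R)$; the spatial tail $\int_{|t| \geq R} |g(t)|^2 \, \dif t$ vanishes for all $R \geq 1$, and in particular $\int t^2 |g(t)|^2 \, \dif t < \infty$. Hence only the frequency side of \eqref{clasical blt condition} is in play.

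The key step is to pin down the Fourier decay of $g$. Since $g$ is smooth on $(-1,1)$ and vanishes like $(1 \mp t)^\alpha$ at the endpoints $t = \pm 1$, standard asymptotics (either via the classical Fourier transform of $x_+^\alpha$, an integration-by-parts argument localized at the two cusps, or the explicit representation of $\hat g$ in terms of Bessel functions of order $\alpha + 1/2$) give
$|\hat g(\xi)|^2 \sim c\,|\xi|^{-(2+2\alpha)}$ as $|\xi| \to \infty$, in an averaged sense. Integrating from $L$ to $\infty$ yields
\begin{equation*}
\int_{|\xi| \geq L} |\hat g(\xi)|^2 \, \dif\xi \sim \frac{c'}{L^{1+2\alpha}}, \qquad \int_\R \xi^2 |\hat g(\xi)|^2 \, \dif\xi = \infty,
\end{equation*}
the latter because $2\alpha < 1$. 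Thus $g$ does satisfy condition \eqref{clasical blt condition}.

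Combining the two tails, for $R, L \geq 1$ we find
\begin{equation*}
RL \left( \int_{|t|\geq R} |g(t)|^2 \, \dif t + \int_{|\xi|\geq L} |\hat g(\xi)|^2 \, \dif \xi \right) \lesssim \frac{R}{L^{2\alpha}}.
\end{equation*}
Setting $R = 1$ and letting $L \to \infty$ shows that the infimum of the left-hand side over $R, L \geq 1$ is $0$, so no positive constant $C$ can make \eqref{main estimate} hold for this $g$. The main technical obstacle is establishing the two-sided Fourier asymptotic $|\hat g(\xi)|^2 \sim c|\xi|^{-(2+2\alpha)}$ cleanly; this is classical but requires some bookkeeping at the cusps, and is arguably cleanest through the Bessel-function representation of $\hat g$, which makes the pointwise-oscillation/averaged-decay distinction transparent.
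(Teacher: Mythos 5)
Your argument is correct, and for the counterexample it takes a genuinely different route from the paper. The paper also disposes of the first implication by the computation in \eqref{calculation}, but for the ``moreover'' part it works on the other side of the Fourier transform: it takes the band-limited lacunary sum $g(t)=\sum_{k\ge1} k^{-1/2}2^{-k}\sin^2(t-2^k)/(t-2^k)^2$, whose Fourier transform is supported in $[-2,2]$, shows directly that $\int |t|^2|g|^2\,\dif t=\infty$, and then estimates the spatial tail along the sequence $R_n=(2^n+2^{n+1})/2$ (splitting the sum at $k=n$ and using Cauchy--Schwarz) to get $\int_{|t|>R_n}|g|^2\,\dif t\lesssim 1/R_n^2$, which kills \eqref{main estimate}. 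Your cusp function $(1-t^2)_+^{\alpha}$ with $\alpha\in(0,1/2)$ plays the mirror role: compactly supported in time, with $\int \xi^2|\hat g|^2\,\dif\xi=\infty$ and frequency tail $\sim L^{-(1+2\alpha)}$. Both are valid; your example is arguably more classical and gives a one-parameter family whose tail decay $L^{-(1+2\alpha)}$ can be pushed arbitrarily close to the borderline rate $1/L$, at the cost of importing the Bessel-function (or stationary-phase) asymptotics at the cusps, where you correctly need the averaged lower bound for the divergence of $\int\xi^2|\hat g|^2\,\dif\xi$ and only the pointwise upper bound $|\hat g(\xi)|\lesssim|\xi|^{-1-\alpha}$ for the tail. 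The paper's choice has the side benefit, recorded in its Remark \ref{first remark}, that $g+\hat g$ then yields an example where \emph{both} integrals in \eqref{clasical blt condition} diverge while \eqref{main estimate} still fails, which is reused later for the $(p,q)$ theorem; your example as stated only makes the frequency integral diverge, though the same $g+\hat g$ trick would apply.
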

\begin{proof}
%
Throughout this proof, we let $C$ denote
different constants which may change from line to line. Let
\begin{equation*}
 {g}(t)=\sum_{k=1}^{\infty}\frac{\sin^2(t-2^k)}{k^{1/2}2^k(t-2^k)^2},
\end{equation*}
and note that its Fourier transform is supported on $[-2,2]$. The
function ${g}$ satisfies
\begin{equation*}
\int_\R |t|^2| {g}(t)|^2\dif t\geq
\sum_{k=1}^{\infty}\frac{1}{k}\int_{2^k}^{2^k+1}\frac{\sin^4(t-2^k)}{(t-2^k)^4}\dif t=\infty.
 \end{equation*}
  Given a positive integer $n$, set
$R_n=[2^n+2^{(n+1)}]/2$. To estimate
\begin{equation*}
    I:=\int_{|t|>R_n}| {g}(t)|^2\dif t,
\end{equation*}
write $ {g}=\sum_{k=1}^n+\sum_{k=n+1}^{\infty}$ and
correspondingly $I<2I_1+2I_2$. By the Cauchy-Schwartz inequality,
we have
\begin{equation*}
I_1:=\int_{|t|>R_n}\Bigabs{\sum_{k=1}^{n}\frac{1}{k^{1/2}2^k(t-2^k)^2}}^2\dif t
\leq C\frac{n}{2^{3n}},
\end{equation*}
which is smaller than $C/R_n^2$ whenever $n$ is big enough. We
turn to estimating $I_2$. To this end let $w$ be the Fourier
transform of ${\sin^2t/t}$. We have
\[
I_2 
\leq \int_{\R}\Bigabs{\sum_{k=n+1}^{\infty}\frac{1}{k^{1/2}2^k}e^{i2^k\xi}w(\xi)}^2\dif t \leq
C\frac{1}{n2^{2n}},
\]
which is smaller than $C/R_n^2$ whenever $n$ is big enough.
It follows that the function ${g}$ satisfies condition
\eqref{clasical blt condition}, while condition \eqref{main
estimate} is not fulfilled.
\end{proof}
\begin{remark} \label{first remark}
    In fact, the function $g + \hat{g}$ is an example of a function that does not satisfy condition \eqref{main estimate}, but for which  both integrals in \eqref{clasical blt condition} diverge.
\end{remark}

To better understand the relation between Theorem \ref{main result} and the
classical Balian-Low theorem, we consider the symmetric case $L=R$. In  this case,  $g \in L^2(\R)$ satisfies condition \eqref{main estimate} of Theorem \ref{main result} if and only if
\begin{equation} \label{hello}
\liminf_{R\rightarrow\infty}R^2 \left(\int_{|t|>R}|g(t)|^2\dif t +
 \int_{|\xi|>R}|\hat{g}(\xi)|^2\dif \xi \right)>C.
\end{equation}

The classical Balian-Low theorem is
related to similar statements, with $\limsup$ in place of $\liminf$.  Indeed, the calculation \eqref{calculation} implies that  if a function $g \in L^2(\R)$ satisfies   condition \eqref{hello} with this modification, then  it also  satisfies   condition \eqref{clasical blt condition}.

As a converse to this statement, it is easy to show the following. 
If $g\in L^2(\R)$  satisfies condition \eqref{clasical blt
condition} then, for every $\epsilon>0$, it holds that
\begin{equation*}
\limsup_{R\rightarrow\infty}R^{2+\epsilon} \left( \int_{|t|>R}|g(t)|^2\dif t 
+   \int_{|\xi|>R}|\hat{g}(\xi)|^2\dif \xi \right) =\infty.
\end{equation*}
This is sharp in the sense that removing the $\epsilon$ may give a limit equal to $0$. 

\subsection{ The $(p,q)$ Balian-Low theorem} \label{pq subsection}

Fix $1\leq p\leq 2$ and let $ q $ be such that $1/p+1/q=1$.
The $(p,q)$ Balian-Low theorem states that if $g\in
L^2(\R)$ generates a Gabor Riesz basis on the integer lattice, then
\begin{equation}\label{p-q}
\int|t|^p|g(t)|^2\dif t=\infty\:\:\:\:\:\text{or}\:\:\:\:\:\int|\xi|^q|\hat{g}(\xi)|^2\dif \xi=\infty.
\end{equation}
For  exponents $p+\epsilon$ and $q+\epsilon$ in place of $p$ and $q$,
the original proof follows by combining \cite[Theorem 4.4]{feichtinger_grochenig1997} with \cite[Theorem 1]{grochenig1996}.
This was improved to \eqref{p-q} in \cite{gautam2008} using methods from the theory of VMO functions.

The following proposition implies that the $(p,q)$ Balian-Low
theorem follows from Theorem \ref{main result}.

\begin{proposition}
If a function in $L^2(\R)$ satisfies condition \eqref{main
estimate}, then it also satisfies condition \eqref{p-q}.
Moreover, there exists a function $g\in L^2(\R)$ that satisfies condition \eqref{p-q},  but does not
satisfy condition \eqref{main estimate}.
\end{proposition}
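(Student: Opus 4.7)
The plan is to treat the two implications separately. For the first (that \eqref{main estimate} implies \eqref{p-q}), I would argue by contradiction and mimic the calculation \eqref{calculation} from Proposition \ref{th1 better}, only now replacing the symmetric choice $R=L$ by a H\"older-adapted one. Concretely, assuming both integrals in \eqref{p-q} are finite, a Chebyshev-type tail bound yields $\int_{|t|>R}|g|^2\,dt=o(R^{-p})$ and $\int_{|\xi|>L}|\hat g|^2\,d\xi=o(L^{-q})$ as $R,L\to\infty$. The identity $(p-1)q=p$, equivalent to $1/p+1/q=1$, makes the choice $L:=R^{p-1}$ natural: it gives
\[
    RL=R^{p}=L^{q},
\]
so both tails are $o(1/(RL))$, which violates \eqref{main estimate} for large $R$. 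This handles the range $p\in(1,2]$; the endpoint $p=1$ (so $q=\infty$) is understood accordingly.

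For the second implication, I would exhibit an explicit function obtained by tuning the weights in the example used in the proof of Proposition \ref{th1 better}. Set
\[
    g_p(t):=\sum_{k=1}^{\infty}\frac{\sin^2(t-2^k)}{a_k\,2^k(t-2^k)^2},\qquad a_k:=\sqrt{k}\,2^{k(p-2)/2}.
\]
Since $\hat g_p$ is supported in a bounded interval, $\int|\xi|^q|\hat g_p|^2\,d\xi<\infty$ holds automatically. Near the peak $t=2^k$ the dominant term gives $|g_p(t)|^2\asymp 1/(a_k^2 2^{2k})$, and a short computation controlling the cross-terms produces
\[
    \int_{2^k}^{2^k+1}|t|^p|g_p(t)|^2\,dt\gtrsim \frac{2^{kp}}{a_k^2\,2^{2k}}=\frac{1}{k},
\]
so $\int|t|^p|g_p|^2\,dt=\infty$ and condition \eqref{p-q} is satisfied. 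To contradict \eqref{main estimate}, I would take $R_n:=3\cdot 2^{n-1}$ together with $L:=2$ (so that the Fourier tail vanishes) and split $g_p=\sum_{k\leq n}+\sum_{k>n}$; repeating the two-step estimate from the proof of Proposition \ref{th1 better} (Cauchy--Schwarz for the first block, a lacunary Parseval-type bound for the second) then yields
\[
    \int_{|t|>R_n}|g_p|^2\,dt=O\!\left(\frac{1}{n R_n^{p}}\right)=o\!\left(\frac{1}{R_n}\right),
\]
falsifying \eqref{main estimate} asymptotically.

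The main technical point is the choice of the weights $a_k$: they must be small enough to force $\int|t|^p|g_p|^2$ to diverge, yet large enough to make the $L^2$-tail beyond $R_n$ decay strictly faster than $1/R_n$. The exponent $(p-2)/2$ is dictated precisely by the balance between these two competing demands, and once the weights are fixed the tail estimates track essentially line-by-line with the argument in the proof of Proposition \ref{th1 better}.
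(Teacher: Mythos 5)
Your first implication is the paper's own argument in contrapositive form: the paper also sets $L=R^{p-1}$, uses $RL=R^{p}=L^{q}$ to absorb the factor $RL$ into the weights $|t|^{p}$ and $|\xi|^{q}$, and concludes that one of the integrals in \eqref{p-q} must diverge; your Chebyshev phrasing is the same computation read backwards. For the counterexample, however, you take a genuinely different route. The paper simply invokes Remark \ref{first remark}: the function $g+\hat g$ built from the example in Proposition \ref{th1 better} fails \eqref{main estimate}, and since both integrals in \eqref{clasical blt condition} diverge for it, the frequency-side integral $\int|\xi|^{q}|\hat g|^{2}\,\dif\xi$ with $q\geq 2$ diverges a fortiori (the weight $|\xi|^{q}$ dominates $|\xi|^{2}$ for $|\xi|\geq 1$), so one example settles all $(p,q)$ at once. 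You instead recalibrate the weights to $a_k=\sqrt{k}\,2^{k(p-2)/2}$ so that the \emph{time-side} moment $\int|t|^{p}|g_p|^{2}\,\dif t$ diverges while the $L^2$-tail beyond $R_n=3\cdot 2^{n-1}$ is $O(1/(nR_n^{p}))=o(1/R_n)$; your estimates check out (note that $1/(a_k2^k)=k^{-1/2}2^{-kp/2}$ is summable for $p\geq1$, so $g_p\in L^2$ with compactly supported Fourier transform, and for $p=2$ your $a_k$ reduces to the paper's $\sqrt{k}$). What your approach buys is a self-contained family of examples exhibiting divergence on the $|t|^{p}$ side rather than relying on monotonicity in $q$ on the frequency side, at the cost of redoing the two tail estimates for each $p$; the paper's approach is shorter but leans on the $(2,2)$ case. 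One caveat common to both: at the endpoint $p=1$, $q=\infty$, the integral in \eqref{p-q} needs a separate interpretation, which you flag but do not resolve (nor does the paper).
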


\begin{proof}
Assume that  $g\in L^2(\R)$ satisfies condition
\eqref{main estimate}. For $R>0$ and $L=R^{p-1}$, this
 implies that
\begin{equation*}
C\leq\int_{|t|>R}|t|^{p}|g(t)|^2\dif t+\int_{|\xi|>R^{p-1}}|\xi|^{q}|\hat{g}(\xi)|^2\dif \xi,
\end{equation*}
which means that at least one of the integrals in \eqref{p-q} diverges.

The second part of the statement follows from Remark \ref{first
remark}.
\end{proof}


\subsection{ The Amalgam Balian-Low theorem} \label{amalgam subsection}

The Amalgam Balian-Low Theorem \cite[Corollary 7.5.3]{heil1990} states that if $g\in L^2(\R)$
generates a Gabor Riesz Basis on the integer lattice, then
\begin{equation}\label{amalgam}
\sum_n\|g(t)\|_{L^{\infty}[n,n+1]}=\infty
\:\:\:\:\:\textrm{or}\:\:\:\:\:\sum_n\|\hat{g}(\xi)\|_{L^{\infty}[n,n+1]}=\infty.
\end{equation}
This theorem is known to be independent of the classical
Balian-Low theorem \cite[Examples 3.3 and 3.4]{benedetto_heil_walnut1995}.
A simple calculation shows that the same examples also yield
that the Amalgam Balian-Low theorem  is independent of  Theorem \ref{main result}:


%
\begin{proposition}
There exists a function $g\in L^2(\R)$ that satisfies condition
\eqref{main estimate} but does not satisfy condition \eqref{amalgam}.
On the other hand, there exists a function $g\in L^2(\R)$ that
satisfies condition \eqref{amalgam},  but does not satisfy
condition \eqref{main estimate}.
\end{proposition}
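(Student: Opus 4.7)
The plan is to establish both directions of the independence by exhibiting explicit examples, drawing on the functions of \cite[Examples 3.3 and 3.4]{benedetto_heil_walnut1995} together with Proposition \ref{th1 better}.

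For the second assertion I would first invoke the contrapositive of Proposition \ref{th1 better}: if $g \in L^2(\R)$ has both $\int|t|^2|g(t)|^2 \dif t < \infty$ and $\int|\xi|^2|\hat{g}(\xi)|^2 \dif \xi < \infty$, then Chebyshev's inequality gives $\int_{|t|>R}|g|^2 = \Littleoh{R^{-2}}$ and $\int_{|\xi|>L}|\hat g|^2 = \Littleoh{L^{-2}}$, so setting $R=L\to\infty$ makes the left side of \eqref{main estimate} decay strictly faster than $1/(RL)$, defeating every constant $C > 0$. It therefore suffices to produce a function satisfying \eqref{amalgam} with both second moments finite, which is exactly the content of \cite[Example~3.4]{benedetto_heil_walnut1995}.

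For the first assertion I would set $g = g_1 + g_2$, where
\begin{equation*}
    g_1(t)=\sum_{k\geq 2}\frac{1}{k^2}\phi(t-2^k),\qquad\widehat{g_2}(\xi)=\sum_{k\geq 2}\frac{1}{k^2}\phi(\xi-2^k),
\end{equation*}
for a smooth, nonnegative, compactly supported $\phi$ with $\phi(0)>0$. Since $\{1/k^2\} \in \ell^1$, the time-side amalgam sum for $g_1$ is immediately finite, and $\widehat{g_1} = \hat\phi \cdot \sum_k k^{-2}\e^{-2\pi\im 2^k \xi}$ is a Schwartz function times a uniformly bounded factor, so its frequency-side sum is also finite; by the Fourier-symmetry of the construction, the same controls hold for $g_2$. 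Hence \eqref{amalgam} fails. For the lower bound, the disjoint supports of the bumps in $g_1$ give $\int_{|t|>R}|g_1|^2 \gtrsim \sum_{2^k>R} k^{-4} \gtrsim (\log R)^{-3}$, while $g_2$ has Schwartz-type decay in time, so a perturbation argument yields $\int_{|t|>R}|g|^2 \gtrsim (\log R)^{-3}$; symmetrically, $\int_{|\xi|>L}|\hat g|^2 \gtrsim (\log L)^{-3}$. Since $(\log\max(R,L))^3 \leq c\max(R,L) \leq c\,RL$ for $R,L\geq 1$, the left side of \eqref{main estimate} then dominates $1/(c\,RL)$ uniformly.

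The main obstacle lies in the first assertion, namely controlling the cross-interference between $g_1$ and $g_2$ both for the amalgam norms and for the $L^2$-tail lower bounds. The cross-terms are expected to be negligible---$g_2$ has Schwartz-type decay in time and $\widehat{g_1}$ in frequency---but one must choose $\phi$ (with small support and rapid Fourier decay) carefully so that the perturbation argument preserves the dominant logarithmic tail of $g_1$ in time and of $\widehat{g_2}$ in frequency.
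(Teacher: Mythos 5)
Your proposal is correct, and it is worth noting that the paper itself gives no written proof of this proposition: it only remarks that ``a simple calculation'' shows the examples of \cite[Examples 3.3 and 3.4]{benedetto_heil_walnut1995} work for both halves. For the second assertion your route coincides with what the paper intends: the Chebyshev observation that finite second moments force both tails to be $\Littleoh{R^{-2}}$ (equivalently, the contrapositive of Proposition \ref{th1 better} made quantitative), combined with the Benedetto--Heil--Walnut example of a function outside the amalgam space with finite uncertainty product. For the first assertion you replace the citation by an explicit construction, which is a legitimate and fully checkable alternative; in fact your example can be simplified. The single function $g_1=\sum_{k\ge2}k^{-2}\phi(\cdot-2^k)$ already does the job: both amalgam sums are finite (disjoint bumps with $\ell^1$ coefficients on the time side, $|\widehat{g_1}|\le C|\hat\phi|$ on the frequency side), while the time tail alone gives
\begin{equation*}
\int_{|t|>R}|g_1|^2\,\dif t \;\gtrsim\; \sum_{2^k>R+1}k^{-4}\;\gtrsim\;(\log R)^{-3}\;\ge\;\frac{c}{R}\;\ge\;\frac{c}{RL}\qquad(R,L\ge1),
\end{equation*}
so condition \eqref{main estimate} holds without any contribution from the frequency tail. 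Consequently the symmetrization $g=g_1+g_2$, and with it the ``main obstacle'' of cross-interference that you flag, is unnecessary; and even in your two-term version the interference is handled trivially by $|g|^2\ge\tfrac12|g_1|^2-|g_2|^2$ together with the Schwartz-type time decay of $g_2$. The only small caveat is to double-check which of Examples 3.3 and 3.4 in \cite{benedetto_heil_walnut1995} is the one with finite second moments; this is a citation detail, not a mathematical gap.
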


We proceed to show that the Amalgam Balian-Low theorem and Theorem \ref{main result} are
  end-points of a more general Balian-Low type theorem.

\begin{theorem}
Let $1 \leq p \leq 2$ and   $q$ be such that $1/p + 1/q = 1$.
If $G(g)$ is a Riesz basis, then for $R,L\geq 1$ we have
\begin{equation*}
     \frac{C}{(RL)^{p/q}}<  \sum_{\abs{k}>R} \norm{f}_{L^q(k,k+1)}^p+\sum_{\abs{k}>L} \norm{\hat{f}}_{L^q(k,k+1)}^p ,
\end{equation*}
    where $C$ is a positive constant  depending only on the
    Riesz basis bounds.
\end{theorem}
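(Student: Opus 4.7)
The plan is to follow the scheme of the proof of Theorem \ref{main result}, replacing the use of unitarity of the Zak transform on $L^2$ by a Hausdorff--Young type estimate tailored to the amalgam space.

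The key auxiliary inequality I would establish is that, for every $f \in L^2(\R)$ with $\hat f$ locally in $L^q$,
\begin{equation*}
    \|Zf\|_{L^q([0,1]^2)} \leq \|\hat f\|_{W(L^q, \ell^p)},
\end{equation*}
where $\|h\|_{W(L^q, \ell^p)} := \bigl(\sum_{k \in \Z} \|h\|_{L^q(k, k+1)}^p\bigr)^{1/p}$. The proof uses the Poisson-type representation $e^{-2\pi \im x y}\, Zf(x,y) = \sum_k \hat f(y+k)\, e^{2\pi \im k x}$: for each fixed $y$ this is a Fourier series in $x$ with coefficients $\{\hat f(y+k)\}_k$, so Hausdorff--Young in $x$ gives $\|Zf(\cdot,y)\|_{L^q(0,1)} \leq \|\hat f(y+\cdot)\|_{\ell^p}$. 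Raising to the $q$-th power, integrating in $y$, and exchanging the $\ell^p$ sum with the $L^{q/p}$ integral via Minkowski's inequality (valid since $q/p \geq 1$) produces the amalgam norm on the right.

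With this tool, I apply Lemma \ref{average lemma} to the pair $\phi(t) = (L+1)\rho((L+1)t)$ and $\psi(t) = (R+1)\rho((R+1)t)$, with $\rho$ the Schwarz cutoff from the proof of Theorem \ref{main result}. Since $\int|\phi'| \leq cL$ and $\int|\psi'| \leq cR$ for $R,L \geq 1$, the lemma produces a set $S \subseteq [0,1]^2$ of measure at least $c/(RL)$ on which either $|Zg - Z(g \ast \phi)| \geq \delta$ or $|Z\hat g - Z(\hat g \ast \psi)| \geq \delta$ holds. Partitioning $S$ into the two corresponding pieces, at least one, say $S_1$, has measure at least $c/(2RL)$. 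Setting $v = g - g\ast\phi$ in that first case, its Fourier transform $\hat v = \hat g\,(1 - \hat\phi)$ is supported in $\{|\xi|>L+1\}$, so
\begin{equation*}
    \delta\,\bigl(c/(RL)\bigr)^{1/q} \leq \|Zv\|_{L^q([0,1]^2)} \leq \|\hat v\|_{W(L^q,\ell^p)} \leq \Bigl(\sum_{|k|>L} \|\hat g\|_{L^q(k,k+1)}^p\Bigr)^{1/p};
\end{equation*}
raising to the $p$-th power gives $c'/(RL)^{p/q} \leq \sum_{|k|>L} \|\hat g\|_{L^q(k,k+1)}^p$. The second case is symmetric: with $v = \hat g - \hat g\ast\psi$ one computes $\hat v(\xi) = g(-\xi)(1-\hat\psi(\xi))$, supported in $\{|\xi|>R+1\}$, and the same chain yields $c'/(RL)^{p/q} \leq \sum_{|k|>R}\|g\|_{L^q(k,k+1)}^p$. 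Summing the contributions from the two cases completes the proof.

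The only step I expect to require real care is verifying the Hausdorff--Young / Minkowski bound for the Zak transform: the condition $q/p \geq 1$, equivalent to $1\leq p\leq 2$, is precisely what makes both ingredients available, and explains the hypothesis on $p$. The shift from $R,L$ to $R+1, L+1$ in the cutoffs is a cosmetic adjustment ensuring that the strict inequality $|k|>R$ appears in the amalgam sums, absorbed into the constant since $R,L \geq 1$.
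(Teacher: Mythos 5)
Your argument is correct, and it reaches the paper's key inequality $\norm{Zv}_{L^q(Q)} \leq \bigl(\sum_k \norm{\hat v}_{L^q(k,k+1)}^p\bigr)^{1/p}$ by a genuinely different route. The paper records the two endpoint bounds $\norm{Zg}_{L^2(Q)} = \bigl(\sum_k\norm{g}^2_{L^2(k,k+1)}\bigr)^{1/2}$ and $\norm{Zg}_{L^\infty(Q)} \leq \sum_k\norm{g}_{L^\infty(k,k+1)}$ and invokes complex interpolation of vector-valued sequence spaces (Bergh--L\"ofstr\"om) to get $\norm{Zg}_{L^q(Q)}^p \leq \sum_k\norm{g}^p_{L^q(k,k+1)}$; by the symmetry $\abs{Z\hat g(x,y)} = \abs{Zg(-y,x)}$ this is equivalent to your Hausdorff--Young version, which you instead derive directly from the Fourier-series representation $\e^{-2\pi\im xy}Zf(x,y)=\sum_k\hat f(y+k)\e^{2\pi\im kx}$ together with Minkowski's inequality. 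Your derivation is more self-contained and handles $p=1$, $q=\infty$ uniformly (the paper treats that endpoint separately), at the cost of needing the a.e.-in-$y$ justification of Hausdorff--Young, which you correctly flag; the interpolation route is shorter but imports a black box. The remaining differences are cosmetic: you split the set $S$ from Lemma \ref{average lemma} into two pieces and keep the larger one, while the paper integrates both terms over all of $Q$ and finishes with $\abs{x}^\beta+\abs{y}^\beta\leq C(\abs{x}+\abs{y})^\beta$; and your shift to cutoffs at $R+1$, $L+1$ is a legitimate way to make the support bookkeeping for the sums $\abs{k}>R$, $\abs{k}>L$ clean, absorbed into the constant as you say.
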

%
Note that for $p=q=2$ we get Theorem \ref{main result}, while for $p=1$ and $q=\infty$ we get the Amalgam
Balian-Low theorem.
\begin{proof}
It is easy to verify that the Zak transform satisfies
\begin{equation*}
    \norm{Zg}_{L^2(Q)}^2 = \norm{g}_{L^2(\R)}^2= \sum_k \norm{g}_{L^2(k,k+1)}^2,
    \end{equation*}
    and
    \begin{equation*}
    \norm{Zg}_{L^\infty(Q)}  \leq \sum_k \norm{g}_{L^\infty(k,k+1)}.
\end{equation*}

By the method of complex interpolation for vector valued sequence
spaces (see, e.g., \cite[Theorem 5.1.2]{bergh_lofstrom1976}), this implies that
\begin{equation}\label{interpolation}
 \norm{Zg}_{L^q(Q)}^p  \leq \sum_k \norm{g}_{L^q(k,k+1)}^p.
\end{equation}

We first prove the theorem for $1<p<2$. Following the same
technique as in the proof of Theorem \ref{main result}, we use Lemma
\ref{average lemma} to find that

\begin{align*}
     \frac{C\delta^q}{RL} & \leq \iint_Q \abs{Z\hat{g} - Z(\hat{g} \ast \phi)}^q +  \abs{Zg - Z( g \ast \psi)}^q \dif x \dif y \\
     &=  \iint_Q \abs{Z\big( g(1-  \hat{\phi}) \big)}^q +   \abs{Z\big( \hat{g}(1-    \hat{\psi})\big)}^q \dif x \dif y.
\end{align*}
By \eqref{interpolation}, this is smaller than
\begin{multline*}
    \left(\sum_{k} \norm{g(1 - \hat{\phi})}_{L^q(k,k+1)}^p \right)^{q/p}+ \left(\sum_k \norm{\hat{g}(1- \hat{\psi})}_{L^q(k,k+1)}^p\right)^{q/p}
    \\
    \leq
    \left(\sum_{\abs{k}>R} \norm{g }_{L^q(k,k+1)}^p \right)^{q/p}+ \left(\sum_{\abs{k}>L} \norm{\hat{g} }_{L^q(k,k+1)}^p\right)^{q/p}.
\end{multline*}
The desired inequality now follows from the fact that $\abs{x}^\beta +
\abs{y}^\beta \leq C (\abs{x}+\abs{y})^\beta$ for all $\beta\geq 0$, with $C>0$
depending only on $\beta$.

The case $p=1$ is proved in much the same way.
\end{proof}

\section{Open problem} \label{open problems}
A much more general version of \eqref{a number} was obtained by F. Nazarov \cite{nazarov1993}:
 There exist constants $A,C>0$ such that for $g\in L^2(\R)$ and   any two
sets $S,K\subset \R$ of finite measure the following inequality holds
\begin{equation*}
        \int_{\R\setminus S} \abs{g(x)}^2 \dif x + \int_{\R\setminus K}  \abs{\hat{g}(\xi)}^2 \dif \xi \geq
        C\e^{-A|S||K|}\norm{g}^2_{L^2(\R)}.
\end{equation*}
Does this theorem have a version for functions $g$ that generate
  Gabor Riesz bases on the integer lattice?

\section*{Acknowledgements} The authors wish to thank A.~Aleman for inviting the first
author to Lund university and M.~Sodin for inviting the second
author to Tel-Aviv university. These visits provided us with the
best setting possible for completing this project.

\bibliographystyle{amsplain}
\def\cprime{$'$} \def\cprime{$'$} \def\cprime{$'$}
\providecommand{\bysame}{\leavevmode\hbox to3em{\hrulefill}\thinspace}
\providecommand{\MR}{\relax\ifhmode\unskip\space\fi MR }
\providecommand{\MRhref}[2]{%
  \href{http://www.ams.org/mathscinet-getitem?mr=#1}{#2}
}
\providecommand{\href}[2]{#2}

\end{document}